\documentclass[11pt]{amsart}

\setlength{\textheight}{22cm}
\setlength{\textwidth}{15.75cm}
\hoffset=-1.6cm

\usepackage{amsfonts,amssymb,amsmath,amsthm,latexsym,amscd}
\usepackage{epsfig,verbatim}
\usepackage{graphics,textcomp, hyperref}
\usepackage{graphicx}
\usepackage{url}

\theoremstyle{definition}
\newtheorem{defn}{Definition}[section]
\newtheorem{rem}[defn]{Remark}

\theoremstyle{plain}
\newtheorem{thm}[defn]{Theorem}

\newtheorem{lemma}[defn]{Lemma}

\newtheorem{qn}[defn]{Question}
\newtheorem{res}[defn]{Result}

\numberwithin{equation}{section}




\newcommand{\Fu}{\mathcal{C}}


\begin{document}

\title[The Fujiki class and positive degree maps]{The Fujiki class
and positive degree maps}

\author[G. Bharali]{Gautam Bharali}
\address{Department of Mathematics, Indian Institute of Science, 
Bangalore 560012, India}
\email{bharali@math.iisc.ernet.in}

\author[I. Biswas]{Indranil Biswas}
\address{School of Mathematics, Tata Institute of Fundamental
Research, Homi Bhabha Road, Bombay 400005, India}
\email{indranil@math.tifr.res.in}

\author[M. Mj]{Mahan Mj}
\address{Department of Mathematics, RKM Vivekananda University, P.O. 
Belur Math, Howrah 711202, India}
\email{mahan@rkmvu.ac.in}

\subjclass[2010]{Primary 32H04, 57R35}

\keywords{Fujiki class, Gromov partial order}

\thanks{GB is supported by a UGC Centre for Advanced Study grant. IB and MJ are supported
by J.C. Bose Fellowships}

\begin{abstract}
We  show that a map between complex-analytic manifolds, at least one of which
is in the Fujiki class, is a biholomorphism under a natural condition on the second cohomologies.
We use this to establish that, with mild restrictions, a certain relation of ``domination'' introduced by Gromov
is in fact a partial order.  
\end{abstract}

\maketitle

\section{Introduction}\label{S:intro}
Gromov introduced a relation of {\em domination} between smooth 
closed manifolds of a fixed dimension by declaring $M \geq N$ if there is a smooth map of
positive degree from $M$ to $N$ (see Section~\ref{gpo} for details).
This relation can be made more restrictive by demanding that the map be of degree one. The main aim
of this paper is to describe this relation in the context of smooth projective varieties and K{\"a}hler 
manifolds, and show that this relation is in fact a partial order on certain natural classes of complex
manifolds of a fixed dimension, e.g.,
\begin{enumerate}
\item on the class of smooth projective varieties of general type, 
\item on the class of smooth projective varieties that are Kobayashi hyperbolic.
\end{enumerate}
One can add to this list; see Theorem~\ref{gro-po}. 
Gromov introduced the relation ``$\geq$'' in the context of constant negative sectional curvature. Theorem \ref{gro-po}
may be regarded as an attempt to translate this into the context of negative curvature in complex algebraic geometry.
To the best of our knowledge, this relation
has not been dealt with in the context of complex-analytic manifolds or smooth varieties.
\smallskip

A first step towards the above project is to try to address the following loosely-worded (but related)
question: Let $X$ and $Y$ be two compact complex manifolds and let $f: Y\,\longrightarrow\,X$ be a
surjective holomorphic map. If one of these manifolds is K{\"a}hler, then can one deduce further
information about the other manifold or the map $f$\,? (In this article, it is implicitly assumed that a
manifold is connected.)
\smallskip

It turns out that if $Y$ is a K{\"a}hler manifold, then one can say quite a lot about $X$. We begin
with the following definition:

\begin{defn}[Fujiki, \cite{fu}]
A reduced compact complex space $X$ is said to belong 
to the {\em Fujiki class $\Fu$} if it is a meromorphic  image of a compact K{\"a}hler space.
\end{defn}

\noindent{In particular, any manifold $X$ as in the question above, with $Y$ a K{\"a}hler manifold,
is in the Fujiki class $\Fu$ (and has all the properties known about the elements in this class).   
However, without any additional conditions, we cannot even infer that $X$ is
K{\"a}hler: any non-projective Moishezon manifold is in the class $\Fu$ but is non-K{\"a}hler.
In contrast, Varouchas has shown \cite{varouchas:scvK} that, with $Y$, $X$ and $f$ as in the question
above and $Y$ K{\"a}hler, if every fiber of $f$ has the same dimension, then $X$ is K{\"a}hler.
This tactic\,---\,i.e., imposing some condition on the map $f$ in the above question\,---\,turns out
to be quite useful. It gives us our first result, which is a key tool in proving Theorem~\ref{gro-po}.
\smallskip
  
We ought to mention that Varouchas \cite[Section\,IV.3]{Va} has shown that a compact complex manifold belongs to
Fujiki's class $\Fu$ if and only if it is {\em bimeromorphic} to a compact K{\"a}hler manifold. It is the
latter property that we shall use in our proof of the following result.  

\begin{thm}\label{thm2}
Let $X$ and $Y$ be compact connected complex manifolds satisfying
$$
\dim X\,=\,\dim Y \quad \text{and} \quad
\dim H^2(X,\,{\mathbb Q})\,=\,\dim H^2(Y,\,{\mathbb Q}).
$$
Let $\varphi : Y\,\longrightarrow\, X$ be a surjective holomorphic map of degree one. If at least one of
$X$ and $Y$ is in the Fujiki class $\Fu$, then $\varphi$ is a biholomorphism.
\end{thm}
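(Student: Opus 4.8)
The plan is to argue by contradiction: if $\varphi$ is not a biholomorphism, then the equality $\dim H^2(X,\mathbb{Q})=\dim H^2(Y,\mathbb{Q})$ forces a certain effective divisor to be homologically trivial, which the Fujiki-class hypothesis forbids. Write $n=\dim X=\dim Y$. First I would remove the asymmetry in the hypotheses. Since $\varphi$ is a surjective holomorphic map of degree one between compact manifolds of the same dimension, it is bimeromorphic, so $X$ and $Y$ are bimeromorphic; by Varouchas's characterization of $\Fu$ as the manifolds bimeromorphic to compact K\"ahler manifolds, membership in $\Fu$ is a bimeromorphic invariant, and hence \emph{both} $X$ and $Y$ lie in $\Fu$. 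In particular I may use that $Y\in\Fu$. Next I record the topological consequence of $\deg\varphi=1$: the Gysin homomorphism $\varphi_{!}$ satisfies $\varphi_{!}\circ\varphi^{*}=(\deg\varphi)\,\mathrm{id}=\mathrm{id}$ on $H^{*}(X,\mathbb{Q})$, so $\varphi^{*}\colon H^2(X,\mathbb{Q})\to H^2(Y,\mathbb{Q})$ is injective; combined with the equality of the second Betti numbers this makes $\varphi^{*}$ an isomorphism.

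Now suppose $\varphi$ is not a biholomorphism. Being a bimeromorphic morphism onto the smooth (hence normal) manifold $X$, the map $\varphi$ is an isomorphism over the complement of an analytic subset of $X$ of codimension at least two; by the purity of the exceptional locus of a bimeromorphic morphism between smooth manifolds, the exceptional set in $Y$ is a divisor, so there is an irreducible exceptional divisor $D\subset Y$ with $\dim\varphi(D)\le n-2$. The key computation is that $[D]=0$ in $H^2(Y,\mathbb{Q})$. Indeed, since $\varphi^{*}$ is an isomorphism, the class $[D]=c_1(\mathcal{O}_Y(D))$ equals $\varphi^{*}\alpha$ for a unique $\alpha\in H^2(X,\mathbb{Q})$, and applying $\varphi_{!}$ gives $\alpha=\varphi_{!}[D]$. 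Under Poincar\'e duality $\varphi_{!}[D]$ corresponds to the pushforward $\varphi_{*}[D]$ of the fundamental cycle of $D$, which vanishes because $\varphi(D)$ has real dimension at most $2n-4$ and so cannot carry a nonzero class in $H_{2n-2}(X,\mathbb{Q})$. Hence $\alpha=0$ and $[D]=0$.

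Finally I would reach a contradiction using $Y\in\Fu$. Choose a modification $\mu\colon\widehat{Y}\to Y$ with $\widehat{Y}$ compact K\"ahler, with K\"ahler form $\omega$; since $D$ is a divisor it is not contained in the codimension-$\ge 2$ center of $\mu$, so its strict transform $\widehat{D}\subset\widehat{Y}$ is a nonzero effective divisor and $\mu^{*}[D]=[\widehat{D}]+\sum_j m_j[E_j]$ in $H^2(\widehat{Y},\mathbb{Q})$, where the $E_j$ are the $\mu$-exceptional prime divisors and $m_j\ge 0$. From $[D]=0$ we obtain $[\widehat{D}+\sum_j m_jE_j]=0$; but $\widehat{D}+\sum_j m_jE_j$ is a nonzero effective divisor on the K\"ahler manifold $\widehat{Y}$, so pairing its class with $[\omega]^{\,n-1}$ gives $\int_{\widehat{D}+\sum_j m_jE_j}\omega^{\,n-1}>0$, contradicting the vanishing of the class. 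Therefore $\varphi$ contracts no divisor and is a biholomorphism. I expect the main obstacle to lie in the two steps that make the role of each hypothesis transparent: the purity statement, which uses smoothness of $X$ in an essential way (it is what excludes ``small'' contractions, such as a curve collapsing to a point, that occur over singular targets), and the final step, which is where the Fujiki-class hypothesis is genuinely used, through the elementary but crucial fact that an effective divisor on a compact K\"ahler manifold has nonzero cohomology class.
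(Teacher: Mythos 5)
Your proposal is correct and takes essentially the same route as the paper: establish that $\varphi^*$ is an isomorphism on $H^2$ from injectivity plus equal Betti numbers, deduce that the divisor contracted by $\varphi$ is homologically trivial because its image has codimension at least two, pull it back to a compact K\"ahler model of $Y$ (obtained from the Fujiki hypothesis via Hironaka) where a nonzero effective divisor cannot have vanishing class, and handle the case $X\in\Fu$ by noting that membership in $\Fu$ is a bimeromorphic invariant. The only cosmetic differences are that you produce the relevant divisor via purity of the exceptional locus (and track the strict transform plus exceptional components), whereas the paper works with the divisor of the canonical section $\bigwedge^d d\varphi$ and its full preimage $f^{-1}(D)$, thereby sidestepping any appeal to a purity theorem.
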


Theorem \ref{thm2} is a variation of a result of \cite{BB} which is recalled
in Section \ref{sec3}. We shall use the Hironaka elimination of indeterminacies (cf. Lemma \ref{lem1}) as a tool.
A straightforward consequence is the fact (known to experts)  that the class of fundamental groups
of compact manifolds in the Fujiki class coincide with the class of K\"ahler groups (see the last paragraph
of \cite{Ar}).
\smallskip

Theorem~\ref{thm2} is used in Section~\ref{gpo} to show that the Gromov relation introduced at the
beginning of this paper is in fact a partial order under certain natural hypotheses. 
In the notation of Section~\ref{gpo}, we have the following key application of Theorem~\ref{thm2}, and the
principal step to the main result of Section~\ref{gpo}:

\begin{thm}\label{P:cor3}
Let $X$ and $Y$ be compact connected complex manifolds with $\dim X\,=\, \dim Y$
such that at least one of $X, Y$ belongs to the Fujiki class $\Fu$.
Further, suppose that $X \geq Y$ and $Y\geq X$. If $X, Y$ are not biholomorphic,
then $X$ admits a self-endomorphism of degree greater than one.
\end{thm}

\section{A criterion for biholomorphism: The proof of
Theorem~\ref{thm2}}\label{sec3}
We begin with the following lemma, which is a form of the Hironaka
elimination of indeterminacies.

\begin{lemma}\label{lem1}
Let $X$ be a compact connected complex manifold in the Fujiki class $\Fu$.
Then there exists a pair $(Y, f)$, where $Y$ is a compact connected K{\"a}hler manifold with
$\dim Y\,=\, \dim X$, and
$$
f\,:\,Y\,\longrightarrow\,X
$$
is a surjective {\em holomorphic} map of degree one.
\end{lemma}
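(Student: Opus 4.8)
The plan is to use the characterization of the Fujiki class recalled earlier in the excerpt: by Varouchas \cite{Va}, a compact complex manifold $X$ belongs to $\Fu$ if and only if it is bimeromorphic to a compact Kähler manifold. So I begin by invoking this to produce a compact Kähler manifold $Z$ together with a bimeromorphic map $g\colon Z \dashrightarrow X$. The goal is then to convert this bimeromorphism into an honest surjective \emph{holomorphic} map of degree one from a Kähler manifold, which requires cleaning up the indeterminacy locus of $g$.

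The key step is to resolve the indeterminacies of the bimeromorphic map. The standard tool here is a sequence of blow-ups along smooth centers (Hironaka resolution of indeterminacy): starting from $Z$, one constructs $Y \to Z$, a composite of blow-ups, so that the induced map $f\colon Y \longrightarrow X$ becomes everywhere-defined and holomorphic. Since $Y$ is obtained from the Kähler manifold $Z$ by successive blow-ups along smooth compact centers, $Y$ is again a compact Kähler manifold; this uses the standard fact that a blow-up of a Kähler manifold along a smooth submanifold is Kähler. After this step I have a genuine holomorphic map $f\colon Y \longrightarrow X$ with $Y$ compact Kähler and $\dim Y = \dim X$.

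It remains to check that $f$ is surjective and of degree one. Surjectivity follows because $f$ agrees with the bimeromorphic map $g$ (composed with the blow-down) on a dense Zariski-open set, so its image contains a dense open subset of $X$; by compactness the image is closed, hence all of $X$. For the degree, since $f$ restricts to a biholomorphism between dense Zariski-open subsets of $Y$ and $X$ (the complement of the exceptional loci on the source and the indeterminacy/modification locus on the target), the generic fiber is a single reduced point, so $\deg f = 1$.

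**The main obstacle** I expect is being careful about what ``degree one'' means and ensuring the resolution does not disturb it: one must verify that the blow-up centers and exceptional divisors are genuinely lower-dimensional, so that $f$ is a biholomorphism over a dense open set and the mapping degree is exactly $1$ rather than merely finite. The other point demanding care is confirming that the Kähler property is preserved under the \emph{iterated} blow-up needed for full resolution of indeterminacy, not just under a single blow-up; this is a known result but should be cited or justified explicitly.
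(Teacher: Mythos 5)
Your proposal follows essentially the same route as the paper's proof: invoking Varouchas's characterization of the Fujiki class to get a bimeromorphic map from a compact K\"ahler manifold, resolving its indeterminacies by a sequence of blow-ups along smooth centers (Hironaka), and using the fact that such blow-ups preserve the K\"ahler property (Blanchard) at each stage of the iteration. Your explicit verification of surjectivity and degree one, which the paper leaves implicit in the word ``bimeromorphic,'' is correct and a welcome addition.
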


\begin{proof}
Let $d$ be the complex dimension of $X$.
A theorem of Varouchas in \cite{Va} says that $X$ is bimeromorphic to a compact
connected K{\"a}hler manifold of complex dimension $d$\,---\,see \cite[p.\,31, Theorem~10]{Ba} for
a short proof). Let
$$
\phi\,:\,Z\,\dasharrow\,X
$$
denote such a bimeromorphic map from a compact
K{\"a}hler manifold $Z$ of dimension $d$.
\smallskip

The \textit{elimination of indeterminacies} says that there is a finite sequence of
holomorphic maps
$$
Z_n\,\stackrel{f_n}{\longrightarrow}\,Z_{n-1}\,
\stackrel{f_{n-1}}{\longrightarrow}\,Z_{n-2}\,\stackrel{f_{n-2}}{\longrightarrow}\,
\cdots \,\stackrel{f_2}{\longrightarrow}\,Z_1\,\stackrel{f_1}{\longrightarrow}\,Z_0
\,=\,Z
$$
such that each $(Z_i, f_i)$, $1\leq i\leq n$, is a blow-up of a smooth complex
submanifold of $Z_{i-1}$, and the bimeromorphic map
$$
\phi\circ f_1\circ \cdots \circ f_n\,:\,Z_n \,\dasharrow\,X
$$
extends to a holomorphic map
$$
\widetilde{\phi}\,:\,Z_n\,\longrightarrow\,X;
$$
see \cite{Hi2} and \cite{Hi1} by Hironaka. We refer the reader to  \cite[p.\,539, \S\,1.2.4]{AKMW} for
some explanation of how the above process can be carried out\,---\,ensuring, especially, that each
successive blow-up is along a smooth center\,---\,using \cite{Hi2}. (This process works in the analytic
case as well as in the algebraic; the case of complex-analytic manifolds is addressed in the last
two paragraphs of \cite[\S\,1.2.4]{AKMW}.) The blow-up of a smooth submanifold
of a K{\"a}hler manifold is K{\"a}hler \cite[p.\,202, Th{\'e}or{\`e}me~II.6]{Bl}. Since
$Z_0$ is K{\"a}hler, we conclude that all $Z_i$ are K{\"a}hler.
\smallskip

We set $Y:=Z_n$ and $f:=\widetilde{\phi}$ to obtain the desired pair $(Y,f)$.
\end{proof}

Let $X$ and $Y$ be compact connected complex manifolds with $\dim X= \dim Y$,
and let
$$
\varphi\,:\,Y\,\longrightarrow\, X
$$
be a surjective holomorphic map of degree one. In \cite{BB} the following was proved: if the underlying
real manifolds for $X$ and $Y$ are diffeomorphic, and also
$$
\dim H^1(X,\,{\mathcal O}_X)\,=\,\dim H^1(Y,\,{\mathcal O}_Y)\, ,
$$
then $\varphi$ is a biholomorphism.
\smallskip

Theorem~\ref{thm2} is a variation of the above result.

\begin{proof}[Proof of Theorem~\ref{thm2}]
The pullback homomorphisms of cohomologies
$$
\varphi^*_i\,:\,H^i(X,\,{\mathbb Q})\,\longrightarrow\,
H^i(Y,\,{\mathbb Q})
$$
are injective for all $i$. Therefore, from the given condition that $\dim H^2(X,\,
{\mathbb Q})\,=\,\dim H^2(Y,\,{\mathbb Q})$ it follows that the homomorphism
\begin{equation}\label{e1}
\varphi^*_2\,:\,H^2(X,\,{\mathbb Q})\,\longrightarrow\,
H^2(Y,\,{\mathbb Q})
\end{equation}
is an isomorphism.
\smallskip

The complex dimension of $X$ will be denoted by $d$. Let
$$
\bigwedge\nolimits^d d\varphi\,\in\,H^0\left(Y,\,\text{Hom}\big(
\bigwedge\nolimits^d T^{1,0}Y\,, \varphi^* \bigwedge\nolimits^d T^{1,0}X\big)\right)
$$
be the $d$-th exterior power of the differential of $\varphi$. The divisor for this homomorphism
$\bigwedge^d d\varphi$ will be denoted by $D$. The map $\varphi$ is a biholomorphism
if $D$ is the zero divisor.
\smallskip

Since the degree of $\varphi$ is one, the image $\varphi(D)$ is of complex codimension
at least two in $X$. Therefore, if
$$
c_D\, \in\, H_{2d-2}(Y,\,{\mathbb Q})
$$
is the class of $D$, then its image $\varphi_*(c_D)\in H_{2d-2}(X,\,{\mathbb Q})$
is zero. This implies that the Poincar{\'e} duality pairing of $c_D$ with
$\varphi^*(H^2(X,\,{\mathbb Q}))\,\subset\, H^2(Y,\, {\mathbb Q})$ vanishes identically.
Since $\varphi^*_2$ in \eqref{e1} is surjective, we now conclude that
\begin{equation}\label{e2}
c_D\,=\, 0\, .
\end{equation}

Let us first assume that $Y$ lies in the Fujiki class $\Fu$.
By Lemma \ref{lem1}, there is a compact connected K{\"a}hler manifold $Z$ of dimension $d$
and a surjective holomorphic map
$$
f\,:\,Z\,\longrightarrow\, Y
$$
of degree one. Consider the effective divisor
$$
\widetilde{D}\,:=\,f^{-1}(D)\,\subset\, Z\, .
$$
From \eqref{e2} we know that the class of $\widetilde{D}$ in
$H_{2d-2}(Z,\, {\mathbb Q})$ vanishes. Since $Z$ is K{\"a}hler, this implies that
$\widetilde{D}$ is the zero divisor. Hence $D$ is the zero divisor. Consequently,
$\varphi$ is a biholomorphism.
\smallskip

Now assume that $X$ lies in the Fujiki class $\Fu$. Therefore,
$X$ is bimeromorphic to a compact connected K\"ahler manifold $Z$ of dimension $d$
\cite{Va}; also see \cite[p.\,31, Theorem~10]{Ba}. Since $\varphi$ is a bimeromorphic
map from $Y$ to $X$, it follows that $Y$ is bimeromorphic to $Z$. Hence $Y$
lies in the Fujiki class $\Fu$. We have already shown that $\varphi$
is a biholomorphism if $Y$ lies in the Fujiki class $\Fu$.
\end{proof}
\smallskip

\section{Gromov Partial Order}\label{gpo}
In a lecture he gave at the Graduate Center CUNY 
in the spring of 1978, Gromov had introduced a notion of ``domination'' between smooth
manifolds as follows \cite{tol-pc, carltol}:
\begin{itemize}
 \item[{}] Let $X, Y$ be closed smooth $n$-manifolds. We say that $X \geq Y$
if there  is a smooth map of positive degree from $X$ to $Y$.
\end{itemize}
\smallskip
 
Gromov introduced this notion in the context of real hyperbolic manifolds. It is not
clear a priori whether ``$\geq$'' is in fact a partial order or not. We transfer this question to the context of projective
and K{\"a}hler manifolds and holomorphic maps between them.
We rephrase this as follows:

\begin{qn}\label{poqn}
Let $X, Y$ be compact projective
(respectively, K{\"a}hler) manifolds of complex dimension $n$. We say that $X \geq Y$ if there  is a 
holomorphic map of positive degree from $X$ to $Y$. We say that $X \geq_1 Y$ if there  is a 
holomorphic map of  degree one from $X$ to $Y$. \\
1) If $X\geq Y$ and $Y\geq X$, are $X$ and $Y$ biholomorphic\,?\\
2) If $X\geq_1 Y$ and $Y\geq_1 X$, are $X$ and $Y$ biholomorphic\,?
\end{qn}

\begin{rem}
Given the conditions on the manifolds $X, Y$ in the discussion above, any positive-degree
holomorphic map from one of them to the other is automatically surjective. Thus, whenever we
apply Theorem~\ref{thm2} to some positive-degree map in the proofs below, we will not
remark upon its surjectivity.
\end{rem}
   
As a consequence of Theorem~\ref{thm2} we have the following theorem, which
is a step towards answering Question~\ref{poqn}. Theorem~\ref{P:cor3}, stated
in the introduction forms a part of the following result, and Part\,(1) below provides
an answer to Question~\ref{poqn}(2).

\begin{thm}\label{prop4}
Let $X$ and $Y$ be compact connected complex manifolds with
$\dim X\,=\, \dim Y$ such that at least one of $X, Y$ belongs
to the Fujiki class $\Fu$. 
\begin{enumerate}
\item If $X \geq_1 Y$ and $Y\geq_1 X$, then $X$ and $Y$ are biholomorphic.

\item Assume that $X \geq Y$ and $Y\geq X$. If $X, Y$ are not
biholomorphic, then $X$ admits a self-endomorphism of degree greater than one.
\end{enumerate}
\end{thm}
\begin{proof}
We may take $X$ to be in the Fujiki class $\Fu$.  
If $f\,:\,X\,\longrightarrow\, Y$ and $g\,:\,Y\,\longrightarrow\, X$
are degree one maps, then $g\circ f$ is a holomorphic automorphism by Theorem~\ref{thm2}.
Therefore, $f$ is a biholomorphism, which proves Part\,(1).
\smallskip

We now consider Part\,(2). Since $X\geq Y$ (respectively, $Y\geq X$),
we have $b_2(X)\,\geq\, b_2(Y)$ (respectively, $b_2(Y)\,\geq\, b_2(X)$)
because, by the assumption of positivity of degree of the map from $X$ to $Y$ (respectively,
$Y$ to $X$), the pullback homomorphism of cohomologies is injective.
Therefore, $b_2(X)\, =\, b_2(Y)$. If $X$ is not biholomorphic to $Y$,
then by Theorem \ref{thm2} we conclude that the degree of any surjective
holomorphic map between $X$ and $Y$ is at least two. Now Part\,(2) now
follows by taking composition of two such maps $X\,\longrightarrow\, Y$
and $Y\,\longrightarrow\, X$.
\end{proof}

The following result summarizes standard facts about non-existence of
non-trivial self-endomorphisms: see \cite{beau}, and \cite{fujimoto} along with the references therein.

\begin{res}\label{noendo}
Let $X$ be a compact connected complex manifold, and let $X$ satisfy one of
the following:
\begin{enumerate}
 \item $X$ is a projective manifold of general type.
 \item $X$ is Kobayashi hyperbolic.
 \item $X$ is a rational homogeneous manifold of Picard number $1$ that is not biholomorphic to
 a complex projective space $\mathbb{P}^n$.
 \item $X$ is a smooth projective hypersurface of dimension greater than $1$ and of degree
 greater than $2$.
\end{enumerate}
Then, any self-endomorphism of $X$ of positive degree is an automorphism.
\end{res}

We combine Theorem~\ref{prop4} with Result~\ref{noendo} to deduce the following:

\begin{thm}\label{gro-po}
Let $X$ and $Y$ be compact connected projective manifolds with $\dim X = \dim Y$.
Suppose that $X \geq Y$ and $Y\geq X$. Also suppose that at least one of $X, Y$
belongs to one of the four classes listed in Result~\ref{noendo}. Then $X$ and $Y$ are biholomorphic.
\end{thm}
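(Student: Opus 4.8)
The plan is to argue by contradiction, combining the dichotomy furnished by Proposition~\ref{prop4}(2) with the endomorphism rigidity of the three special classes recorded in Result~\ref{noendo}. The first observation I would make is a purely structural one: every projective manifold is K\"ahler, and every compact K\"ahler manifold lies in the Fujiki class $\Fu$. Hence \emph{both} $X$ and $Y$ belong to $\Fu$. In particular, the standing hypothesis of Proposition~\ref{prop4} --- that at least one of the two manifolds lie in $\Fu$ --- is met no matter how we label the pair, so we are free to apply that proposition with either manifold playing the role of ``$X$''.

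Next I would suppose, for contradiction, that $X$ and $Y$ are \emph{not} biholomorphic. The hypotheses $X \geq Y$ and $Y \geq X$ are symmetric in the two manifolds, and we have just seen that both are in $\Fu$; therefore I may relabel the pair so that the manifold belonging to one of the three classes of Result~\ref{noendo} is the one called $X$. With this labeling, Proposition~\ref{prop4}(2) applies directly and produces a self-endomorphism $\psi \colon X \longrightarrow X$ of degree strictly greater than one. Concretely, $\psi$ is the composite $X \longrightarrow Y \longrightarrow X$ of the two positive-degree holomorphic maps given by $X \geq Y$ and $Y \geq X$, each of which must have degree at least two by Theorem~\ref{thm2} (since $X$ and $Y$ are assumed non-biholomorphic), so that $\deg \psi \geq 4 > 1$.

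The contradiction is then immediate: by construction $X$ lies in one of the three classes of Result~\ref{noendo}, so every self-endomorphism of $X$ of positive degree is an automorphism and hence has degree one. But $\psi$ has degree $> 1$, which is impossible. Therefore the assumption that $X$ and $Y$ are non-biholomorphic is untenable, and $X$ and $Y$ must be biholomorphic.

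I do not expect any serious obstacle here, since the two substantive inputs are already established earlier in the paper; the only point demanding a little care is the bookkeeping in the relabeling step. One must ensure that the self-endomorphism produced by Proposition~\ref{prop4}(2) lives on precisely the manifold that satisfies the rigidity hypothesis of Result~\ref{noendo}. This is exactly what the symmetry of the domination hypotheses, together with the fact that both manifolds lie in $\Fu$, allows, so that we may always arrange the ``nice'' manifold to be the one to which Proposition~\ref{prop4}(2) attaches the degree-$> 1$ self-map.
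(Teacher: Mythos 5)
Your proposal is correct and follows exactly the argument the paper intends: Theorem~\ref{gro-po} is deduced by combining Proposition~\ref{prop4}(2) with Result~\ref{noendo}, using the fact that projective manifolds are K\"ahler and hence in $\Fu$, and relabeling so that the self-endomorphism of degree greater than one produced by Proposition~\ref{prop4}(2) lives on the manifold satisfying the rigidity hypothesis. Your added bookkeeping (the symmetry of the domination hypotheses and the observation $\deg\psi \geq 4$, via $b_2(X) = b_2(Y)$ and Theorem~\ref{thm2}) is exactly what the paper leaves implicit.
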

\smallskip

\section*{Acknowledgments}

\noindent{We thank N. Fakhruddin for helpful discussions. We also thank the referee of this work
for helpful suggestions on our exposition.}
%
\medskip


\end{document}